\documentclass[12pt]{amsart}
\usepackage[all]{xy}
\usepackage{amsmath}
\usepackage{amsfonts}
\usepackage{amssymb}
\usepackage{mathrsfs}

\DeclareFontEncoding{OT2}{}{} 
\newcommand{\textcyr}[1]{%
 {\fontencoding{OT2}\fontfamily{wncyr}\fontseries{m}\fontshape{n}
 \selectfont #1}}
\newcommand{\Sha}{{\!\!\lbe\mbox{\textcyr{Sh}}}}
\newcommand{\Bha}{{\!\be\lbe\mbox{\textcyr{B}}}}

\def\ra{\rightarrow}
\def \ra{\rightarrow}
\newtheorem{theorem}{Main Theorem\!\!}

\newtheorem{lemma}{Lemma}[section]
\newtheorem{corollary}[lemma]{Corollary}
\newtheorem{proposition}[lemma]{Proposition}
\theoremstyle{definition}

\theoremstyle{remark}
\newtheorem{remark}[lemma]{Remark}

\def\s{\mathscr }

\setlength{\textwidth}{15cm}
\setlength{\oddsidemargin}{0pt}
\setlength{\evensidemargin}{0pt}

\def\le{\kern 0.03em}

\def\krn{{\rm{Ker}}\e }

\def\e{\kern 0.08em}
\def\be{\kern -.1em}
\def\lbe{\kern -.025em}
\def\ra{\rightarrow}
\def\cok{{\rm{Coker}}\e}

\DeclareMathOperator{\Gal}{Gal}

\DeclareMathOperator{\Sel}{{\rm{Sel}}}

\DeclareMathOperator{\spec}{{\rm{Spec}}}

\def\bq{{\mathbb Q}}
\def\bz{{\mathbb Z}}

\begin{document}

\title[The Hasse principle over function fields]{On the Hasse principle for finite
group schemes over global function fields}
\author{Cristian D. Gonz\'{a}lez-Avil\'{e}s and Ki-Seng Tan}

\address{Departamento de Matem\'aticas, Universidad de La Serena, Chile}
\email{cgonzalez@userena.cl}

\address{Department of Mathematics\\
National Taiwan University\\
Taipei 10764, Taiwan}
\email{tan@math.ntu.edu.tw}

\subjclass[2000]{Primary 11G35; Secondary 14K15}

\thanks{C.G.-A. is partially supported by Fondecyt grant 1080025.
K.-S. T. is partially supported by the National Science Council of
Taiwan, NSC99-2115-M-002-002-MY3.}

\begin{abstract} Let $K$ be a global function field of
characteristic $p>0$ and let $M$ be a (commutative) finite and flat $K$-group scheme. We show that the kernel of the
canonical localization map $H^{1}\lbe(K,M)\longrightarrow
\prod_{\e\text{all}\e v}\be H^{1}\lbe(K_{v},M)$ in flat (fppf)
cohomology can be computed solely in terms of Galois cohomology. We then give applications to the case
where $M$ is the kernel of multiplication by $p^{\le m}$ on an
abelian variety defined over $K$.

\end{abstract}
\maketitle

\section{Statement of the main theorem}

Let $K$ be a global field and let $\overline{K}$ be a fixed
algebraic closure of $K$. Let $K^{\le\rm{s}}$ be the separable
closure of $K$ in $\overline{K}$ and set $G_{\lbe
K}=\Gal\le(K^{\le\rm{s}}\lbe/K)$. Further, for each prime $v$ of
$K$, let $\overline{K}_{\be v}$ be the completion of $\overline{K}$
at a fixed prime $\overline{v}$ of $\overline{K}$ lying above $v$
and let $K^{\le\rm{s}}_{v}$ denote the completion of $K^{\le\rm{s}}$
at the prime of $K^{\le\rm{s}}$ lying below $\overline{v}$. Set
$G_{v}=\Gal\le(K_{v}^{\le\rm{s}}\lbe/K_{v})$. If $M$ is a
commutative, finite and flat $K$-group scheme, let $H^{\le
i}(G_{\lbe K},M)$ (respectively, $H^{\le i}(G_{v},M)$) denote the
Galois cohomology group $H^{\le i}(G_{\lbe K},M(K^{\le\rm{s}}))$
(respectively, $H^{\le i}(G_{v},M(K_{v}^{\le\rm{s}}))$). The
validity of the Hasse principle for $M(K^{\le\rm{s}})$, i.e., the
injectivity of the canonical localization map $H^{\le 1}(G_{\lbe
K},M)\ra\prod_{\e\text{all}\e v} H^{\le 1}(G_{v},M)$ in Galois
cohomology, has been discussed in \cite{N,jan82}. See also
\cite{adt}, \S I.9, pp. 117-120. However, if $K$ is a global
function field of characteristic $p>0$ and $M$ has $p$-power order,
the injectivity of the canonical localization map $\beta^{\le
1}(K,M)\colon H^{\le 1}(K,M)\ra\prod_{\e\text{all}\e v} H^{\le
1}(K_{v},M)$ in flat (fppf) cohomology has not been discussed
before (but see \cite{mil70}, Lemma 1, for some particular
cases). In this paper we investigate this problem and show that the
injectivity of $\beta^{\le 1}(K,M)$ depends only on the finite
$G_{\lbe K}$-module $M(K^{\le\rm{s}})$, which may be regarded as the
maximal \'etale $K$-subgroup scheme of $M$. Indeed, let $\Sha^{\le
1}(K,M)=\krn\e\beta^{\le 1}(K,M)$ and set $\Sha^{1}(G_{\lbe
K},M)=\krn[\e H^{\le 1}(G_{\lbe K},M)\ra\prod_{\e\text{all}\e v}
H^{\le 1}(G_{v},M)\e]$. Then the following holds.

\begin{theorem}\label{t:etale} Let $K$ be a global function field of
characteristic $p>0$ and let $M$ be a commutative, finite and flat
$K$-group scheme. Let $v$ be any prime of $K$.
Then the inflation map $H^{\le 1}(G_{\lbe K},M)\ra H^{\le 1}(K,M)$
induces an isomorphism
$$
\krn\!\be\left[\e H^{\le 1}(G_{\lbe K},M)\ra H^{\le
1}(G_{v},M)\right]\simeq \krn\!\be\left[\e H^{\le 1}(K,M)\ra H^{\le
1}(K_{v},M)\right].
$$
In particular, $\Sha^{\le 1}(K,M)\simeq\Sha^{1}(G_{\lbe K},M)$.
\end{theorem}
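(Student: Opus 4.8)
The plan is to replace the comparison between Galois and flat cohomology by the connected--\'etale filtration of $M$ and then to isolate the contribution of the infinitesimal part. First I would introduce the maximal \'etale $K$-subgroup scheme $M_{\mathrm{et}}\hookrightarrow M$, namely the \'etale $K$-group scheme attached to the Galois module $M(K^{\mathrm{s}})$. Since every $K^{\mathrm{s}}$-point of $M$ is an \'etale point, $M_{\mathrm{et}}(K^{\mathrm{s}})=M(K^{\mathrm{s}})$, so flat cohomology of $M_{\mathrm{et}}$ is Galois cohomology, $H^{\le i}_{\mathrm{fppf}}(K,M_{\mathrm{et}})=H^{\le i}(G_{\lbe K},M)$, and the inflation map of the theorem is exactly the map $\iota\colon H^{\le 1}(K,M_{\mathrm{et}})\to H^{\le 1}(K,M)$ induced by the inclusion. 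Writing $N=M/M_{\mathrm{et}}$, the quotient satisfies $N(K^{\mathrm{s}})=0$, hence $N$ is connected (infinitesimal). The basic observation is that an infinitesimal group scheme has no nontrivial points with values in a field: $N(K)=N(K_{v})=0$, because a $K$-algebra homomorphism out of the local Artinian ring $\mathcal{O}_{N}$ must kill its nilpotent augmentation ideal.

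Because $H^{\le 0}(K,N)=0=H^{\le 0}(K_{v},N)$, the long exact flat-cohomology sequences of $0\to M_{\mathrm{et}}\to M\to N\to 0$ over $K$ and over $K_{v}$ reduce to short exact sequences $0\to H^{\le 1}(K,M_{\mathrm{et}})\xrightarrow{\iota}H^{\le 1}(K,M)\xrightarrow{\pi}H^{\le 1}(K,N)$ and its analogue over $K_{v}$; in particular $\iota$ and $\iota_{v}$ are injective. A short diagram chase then shows that $\iota$ carries $\krn[\e H^{\le 1}(K,M_{\mathrm{et}})\to H^{\le 1}(K_{v},M_{\mathrm{et}})]$ isomorphically onto $\krn[\e H^{\le 1}(K,M)\to H^{\le 1}(K_{v},M)]$, \emph{provided} one knows that $H^{\le 1}(K,N)\to H^{\le 1}(K_{v},N)$ is injective: given $x$ in the latter kernel, $\pi(x)$ localizes to $0$, so by the key claim $\pi(x)=0$ and $x=\iota(c)$ for a unique $c$; localizing and using injectivity of $\iota_{v}$ forces $c$ to localize to $0$, whence $c\in\krn[\e H^{\le 1}(K,M_{\mathrm{et}})\to H^{\le 1}(K_{v},M_{\mathrm{et}})]$, and injectivity of $\iota$ gives the reverse inclusion. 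Since this left-hand kernel is precisely $\krn[\e H^{\le 1}(G_{\lbe K},M)\to H^{\le 1}(G_{v},M)]$, one obtains the displayed isomorphism.

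Everything therefore rests on the claim that $H^{\le 1}(K,N)\to H^{\le 1}(K_{v},N)$ is injective for the infinitesimal $N$, and this I expect to be the \emph{main obstacle}. I would prove it by d\'evissage: choosing a composition series of $N$ with simple infinitesimal subquotients, the vanishing of all $H^{\le 0}$-terms again splits the relevant sequences, so by the four-lemma injectivity for $N$ follows from injectivity for each simple factor. A simple infinitesimal factor has order $p$, hence is $\alpha_{p}$, $\mu_{p}$, or a form of one of these; forms of $\alpha_{p}$ are trivial by Hilbert~90 (as $\underline{\mathrm{Aut}}(\alpha_{p})=\mathbb{G}_{m}$), and a form of $\mu_{p}$ is split by a finite separable extension $L/K$ of degree dividing $p-1$, so the prime-to-$p$ restriction map embeds its $H^{\le 1}$ into $H^{\le 1}(L,\mu_{p})$ and reduces the question to $\mu_{p}$ over $L$ at a place $w\mid v$.

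For $\alpha_{p}$ and $\mu_{p}$ the Frobenius and Kummer sequences identify the localization maps with $K/K^{\le p}\to K_{v}/K_{v}^{\le p}$ and $K^{\times}/(K^{\times})^{p}\to K_{v}^{\times}/(K_{v}^{\times})^{p}$. I would establish injectivity of these via the derivation $x\mapsto dx$ (respectively $x\mapsto dx/x$), whose kernel is exactly the group of $p$-th powers, together with the observation that a uniformizer $\pi_{v}\in K$ at $v$ has valuation prime to $p$, hence is a separating variable for $K/\mathbb{F}_{q}$ whose differential stays nonzero in the rank-one module $\widehat{\Omega}^{\le 1}_{K_{v}}$; this makes $\Omega^{\le 1}_{K}\to\widehat{\Omega}^{\le 1}_{K_{v}}$ injective and the claim follows. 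Finally the assertion $\Sha^{\le 1}(K,M)\simeq\Sha^{1}(G_{\lbe K},M)$ is obtained by intersecting the single-place isomorphisms over all $v$, using that $\iota$ is globally injective so that the unique preimage of a class in $\Sha^{\le 1}(K,M)$ lies in $\krn[\e H^{\le 1}(G_{\lbe K},M)\to H^{\le 1}(G_{v},M)]$ for every $v$ simultaneously.
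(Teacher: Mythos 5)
Your overall strategy---split off the maximal \'etale $K$-subgroup scheme $M_{\mathrm{et}}$ and reduce the theorem to the injectivity of $H^{1}(K,N)\to H^{1}(K_{v},N)$ for $N=M/M_{\mathrm{et}}$---is reasonable in outline, and you correctly identify where the difficulty sits. But the execution of that key step rests on a false structural claim: over an \emph{imperfect} field such as $K$, the condition $N(K^{\mathrm{s}})=0$ does \emph{not} imply that $N$ is connected, let alone infinitesimal. For instance, over $K=\mathbb{F}_{p}(t)$ the kernel of $x\mapsto x^{p^{2}}-tx^{p}$ on $\mathbb{G}_{a}$ is a finite flat commutative group scheme of order $p^{2}$ with $M_{\mathrm{et}}=0$ (its nonzero geometric points satisfy $x^{p(p-1)}=t$ and generate an inseparable extension of $K$), yet its coordinate ring factors as $K[x]/(x^{p})\times K[x]/(x^{p(p-1)}-t)$, so it is disconnected and has nontrivial $\overline{K}$-points. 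Consequently your vanishing $N(K_{v})=0$ cannot be deduced from locality of the coordinate ring (note that $K_{v}\not\subseteq K^{\mathrm{s}}$, so $N(K^{\mathrm{s}})=0$ does not help either), and the d\'evissage collapses on two further counts: simple finite commutative group schemes over an imperfect field need not have order $p$ (the height-one group scheme attached to the abelian $p$-Lie algebra $Kx_{1}\oplus Kx_{2}$ with $x_{1}^{[p]}=x_{2}$, $x_{2}^{[p]}=tx_{1}$ is simple of order $p^{2}$ when $t\notin K^{p+1}$), and the successive quotients in a composition series need not inherit the vanishing of $H^{0}$ over $K^{\mathrm{s}}$ or over $K_{v}$, so the four-lemma step is unavailable. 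Your differential-form computations for $\alpha_{p}$ and $\mu_{p}$ are correct, but they do not reach the general case. Note finally that the statement you isolate as the ``key claim'' ($H^{1}(K,N)\to H^{1}(K_{v},N)$ injective whenever $N(K^{\mathrm{s}})=0$) is precisely the theorem for $N$, so the reduction by itself makes no progress.

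The paper avoids all classification issues by a quite different device: Raynaud's theorem embeds $M$ into an abelian variety, $0\to M\to A\overset{\psi}\to B\to 0$, whence $H^{1}(K^{\mathrm{s}},M)\simeq B(K^{\mathrm{s}})/\psi(A(K^{\mathrm{s}}))$; the injectivity of $H^{1}(K^{\mathrm{s}},M)\to H^{1}(K^{\mathrm{s}}_{v},M)$ is then proved by an elementary argument with points of abelian varieties, using $M(\overline{K}_{v})=M(\overline{K})$ and $A(K^{\mathrm{s}})=A(\overline{K})\cap A(K_{v}^{\mathrm{s}})$, and the theorem follows from the inflation--restriction sequence in flat cohomology. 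You would need an analogous input (or a correct classification-free argument for your $N$) to repair the proposal.
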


Thus the Hasse principle holds for $M$, i.e., $\Sha^{\le 1}(K,M)=0$,
if, and only if, the Hasse principle holds for the $G_{\lbe
K}$-module $M(K^{\le\rm{s}})$. An interesting example is the
following. Let $A$ be an ordinary abelian variety over $K$ such that
the Kodaira-Spencer map has maximal rank. Then $A_{p^{\le
m}}\be(K^{\le\rm{s}})=0$ for every integer $m\geq 1$
\cite{vol95}, Proposition, p.1093, and we
conclude that the Hasse principle holds for $A_{p^{\le m}}$

The theorem is proved in the next Section. In Section 3, which
concludes the paper, we develop some applications.

\section{Proof of the main theorem}

We keep the notation introduced in the previous Section. In
addition, we will write $(\spec K)_{\rm{fl}}$ for the flat site on
$\spec K$ as defined in \cite{mil80}, II.1, p.47, and $H^{\le
r}(K,M)$ will denote $H^{r}((\spec K)_{\rm{fl}},M)$. We will see
presently that, in fact, $H^{\le r}(K,M)\simeq H^{r}((\spec
K)_{\rm{fppf}},M)$.

By a theorem of M.Raynaud (see \cite{ray79} or \cite{BBM}, Theorem
3.1.1, p. 110), there exist abelian varieties $A$ and $B$ defined
over $K$ and an exact sequence of $K$-group schemes
\begin{equation}\label{seq}
0\ra M\overset{\iota}\ra A\overset{\psi}\ra B\ra 0,
\end{equation}
where $\iota$ is a closed immersion. For $r\geq 0$, let
$\iota^{(r)}\colon H^{\le r}(K,M)\ra H^{\le r}(K,A)$ and
$\psi^{(r)}\colon H^{\le r}(K,A)\ra H^{\le r}(K,B)$ be the maps
induced by $\iota$ and $\psi$. The long exact flat cohomology
sequence associated to \eqref{seq} yields an exact sequence
$$
0\ra \cok\psi^{(r-1)}\ra H^{\le r}(K,M)\ra \krn\psi^{(r)}\ra 0,
$$
where $r\geq 1$. Since the groups $H^{\le r}(K,A)$ and $H^{\le
r}(K,B)$ coincide with the corresponding \'etale and fppf cohomology
groups \cite{mil80}, Theorem III.3.9, p.114, and \cite{grb68},
Theorem 11.7, p.180,  we conclude that $H^{\le r}(K,M)=H^{r}((\spec
K)_{\rm{fppf}},M)$.

\begin{lemma}\label{l:insp} Let $v$ be any prime of $K$. If $A$
is an abelian variety defined over $K$, then
$A(K^{\le\rm{s}})=A\big(\e\overline {K}\e\big)\cap A(K_{\lbe
v}^{\le\rm{s}})$, where the intersection takes place inside
$A\big(\e\overline {K}_{\be v}\big)$.
\end{lemma}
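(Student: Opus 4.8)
The plan is to reduce the lemma to the field identity $\overline{K}\cap K^{\mathrm{s}}_{v}=K^{\mathrm{s}}$ of subfields of $\overline{K}_{v}$, and then to prove this identity by exhibiting one element that is simultaneously a $p$-basis for all four fields involved. First I would observe that, for any intermediate field $K\subseteq F\subseteq\overline{K}_{v}$, a point $P\in A(\overline{K}_{v})$ lies in $A(F)$ exactly when its field of definition (the subfield of $\overline{K}_{v}$ generated by the coordinates of $P$ in an affine chart) is contained in $F$. Consequently $A(\overline{K})\cap A(K^{\mathrm{s}}_{v})=A\big(\overline{K}\cap K^{\mathrm{s}}_{v}\big)$, and it suffices to compute the intersection field $\overline{K}\cap K^{\mathrm{s}}_{v}$; here the two embeddings of $K^{\mathrm{s}}$ into $\overline{K}_{v}$ agree because both are induced by the fixed prime $\overline{v}$. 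The inclusion $K^{\mathrm{s}}\subseteq\overline{K}\cap K^{\mathrm{s}}_{v}$ is immediate, so only the reverse inclusion is at issue. (The abelian structure plays no role; the same reasoning applies to any separated $K$-scheme.)

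Next I would reduce the reverse inclusion to a single extraction of a $p$-th root. Since $K^{\mathrm{s}}$ is separably closed, $\overline{K}$ is purely inseparable over $K^{\mathrm{s}}$, so every $\alpha\in\overline{K}$ satisfies $\alpha^{p^{n}}\in K^{\mathrm{s}}$ for some $n\geq 0$. Fixing $\alpha\in\overline{K}\cap K^{\mathrm{s}}_{v}$, choosing $n$ minimal with $\alpha^{p^{n}}\in K^{\mathrm{s}}$, and putting $\beta=\alpha^{p^{\,n-1}}$ when $n\geq 1$, one finds $\beta\in K^{\mathrm{s}}_{v}$ and $\beta^{p}\in K^{\mathrm{s}}$. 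Thus the implication ``$\beta\in K^{\mathrm{s}}_{v}$ and $\beta^{p}\in K^{\mathrm{s}}$ imply $\beta\in K^{\mathrm{s}}$'' would contradict the minimality of $n$ unless $n=0$; hence it is enough to prove that implication, which is equivalent to the equality $K^{\mathrm{s}}\cap(K^{\mathrm{s}}_{v})^{p}=(K^{\mathrm{s}})^{p}$.

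To establish this last equality I would pick $t\in K$ with $v(t)=1$, which exists because the normalized valuation $v$ maps $K^{\times}$ onto $\mathbb{Z}$. As $K$ has transcendence degree $1$ over the perfect field $\mathbb{F}_{p}$, one has $[K:K^{p}]=p$; since $v(t)=1\notin p\mathbb{Z}$ forces $t\notin K^{p}$, the element $t$ is a $p$-basis of $K$, so that $K=\bigoplus_{i=0}^{p-1}K^{p}t^{i}$. Because a separable algebraic extension preserves $p$-bases, $t$ is again a $p$-basis of $K^{\mathrm{s}}$, giving $K^{\mathrm{s}}=\bigoplus_{i=0}^{p-1}(K^{\mathrm{s}})^{p}t^{i}$; applying the same two steps to $K_{v}$ (where $[K_{v}:K_{v}^{p}]=p$ and $v(t)=1$) and then to the separable extension $K^{\mathrm{s}}_{v}/K_{v}$ yields $K^{\mathrm{s}}_{v}=\bigoplus_{i=0}^{p-1}(K^{\mathrm{s}}_{v})^{p}t^{i}$. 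Now for $\gamma=\beta^{p}\in K^{\mathrm{s}}$ I would write $\gamma=\sum_{i=0}^{p-1}a_{i}t^{i}$ with $a_{i}\in(K^{\mathrm{s}})^{p}$. Since $(K^{\mathrm{s}})^{p}\subseteq(K^{\mathrm{s}}_{v})^{p}$, this is also the unique expansion of $\gamma$ over $(K^{\mathrm{s}}_{v})^{p}$ in the basis $\{t^{i}\}$; but $\gamma\in(K^{\mathrm{s}}_{v})^{p}$ has expansion consisting of the $i=0$ term alone, so $a_{i}=0$ for $i\geq 1$. Hence $\gamma=a_{0}\in(K^{\mathrm{s}})^{p}$ and $\beta=\gamma^{1/p}\in K^{\mathrm{s}}$, as required.

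The one genuinely nontrivial ingredient — and the step I expect to be the main obstacle — is the separability of $K^{\mathrm{s}}_{v}/K_{v}$ used to transport the $p$-basis $t$ across the completion; equivalently, it is the fact that the completion of the separable closure $K^{\mathrm{s}}$ at $\overline{v}$ coincides with the separable closure of the local field $K_{v}$. This is precisely the identification built into the paper's convention $G_{v}=\Gal(K^{\mathrm{s}}_{v}/K_{v})$, and once it is granted the purely valuation-theoretic facts $[K:K^{p}]=[K_{v}:K_{v}^{p}]=p$ together with the shared $p$-basis $t$ make the comparison of the two direct-sum decompositions completely routine.
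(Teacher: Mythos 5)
Your argument is correct in substance and ultimately rests on the same key fact as the paper's proof --- namely that an element $t\in K$ with $v(t)=1$ controls inseparability both over $K$ and over $K_v$ --- but it packages that fact quite differently. The paper works one finite layer at a time: for a finite separable $F/K$ it writes $F_v=k((t))$, checks $F_v^{\,p^{-m}}=F^{\,p^{-m}}F_v$, and uses the fact that $[F:F^{\,p}]=p$ forces $K(a)=F^{\,p^{-m}}$ for an inseparable $a$, to conclude that $a$ stays inseparable locally. You instead reduce the whole lemma to the single equality $K^{\mathrm{s}}\cap(K_v^{\mathrm{s}})^p=(K^{\mathrm{s}})^p$ and prove it by comparing the two $p$-basis decompositions $K^{\mathrm{s}}=\bigoplus_{i=0}^{p-1}(K^{\mathrm{s}})^pt^i$ and $K_v^{\mathrm{s}}=\bigoplus_{i=0}^{p-1}(K_v^{\mathrm{s}})^pt^i$. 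That is a cleaner, more structural formulation (the imperfection degree $1$ does all the work), and your reduction via the minimal $n$ with $\alpha^{p^n}\in K^{\mathrm{s}}$ is sound.

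However, the one ingredient you flag as the main obstacle and then propose to ``grant'' is false as you literally phrase it, and the distinction matters. The metric completion of $K^{\mathrm{s}}$ at $\overline{v}$ does \emph{not} coincide with the separable algebraic closure of $K_v$: in characteristic $p$ the separable closure of $K_v=k((u))$ is dense in its algebraic closure (the roots of $x^p-u^Nx-u$ are separable over $K_v$ and converge to $u^{1/p}$ as $N\to\infty$), so that completion is the perfect, algebraically closed field $\widehat{\overline{K}_v}$. If $K_v^{\mathrm{s}}$ meant that field, then $(K_v^{\mathrm{s}})^p=K_v^{\mathrm{s}}$, your second direct-sum decomposition would collapse, and the lemma itself would be false. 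What makes your proof (and the paper's) work is that $K_v^{\mathrm{s}}$ must be understood algebraically, as the compositum $K^{\mathrm{s}}K_v=\bigcup_F F_v$ with $F$ running over the finite subextensions of $K^{\mathrm{s}}/K$, i.e.\ as the separable \emph{algebraic} closure of $K_v$ --- the object for which $G_v=\Gal(K_v^{\mathrm{s}}/K_v)$ is the usual local Galois group. Under that reading your transport of the $p$-basis is legitimate, though note that $t\notin F_v^{\,p}$ holds because $t^{1/p}$ is purely inseparable over $K_v$ while $F_v/K_v$ is separable, not because $v_{F_v}(t)$ is prime to $p$ (wild ramification can make $v_{F_v}(t)$ divisible by $p$). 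So: the proof goes through, but precisely at the step you deferred, the definition of $K_v^{\mathrm{s}}$ has to be taken in its algebraic rather than metric sense.
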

\begin{proof} Let $F/K$ be a finite subextension of
$K^{\le\rm{s}}/K$ and let $F_{v}$ denote the completion of $F$ at
the prime of $F$ lying below $\overline{v}$. Choose an element $t\in
F$ such that $F_{v}=k((t))$, where $k$ is the field of constants of
$F$, and let $m\geq 1$ be an integer. Then $F_{v}^{\e
p^{-m}}=k((t^{\e p^{-m}}))=\sum_{i=1}^{p^{m}}(t^{\e
p^{-m}})^{i}F_{v}\subset F^{\e p^{-m}}F_{v}$, whence $F_{v}^{\e
p^{-m}}=F^{\e p^{-m}}F_{v}$. Now let $a\in {\overline{K}}$ be
inseparable over $K$. Then there exists an integer $m\geq 1$ and an
extension $F/K$ as above such that $K(a)=F^{\e p^{-m}}$.
Consequently $a\in K(a)\cdot F_{v} =F^{\e p^{-m}}F_{v}=F_{v}^{\e
p^{-m}}$, whence $a$ is also inseparable over $K_{v}$. This shows
that $K^{\le\rm{s}}=\overline {K}\cap K_{v}^{\le\rm{s}}$. Now let
$V\subset\mathbb A_{K}^{n}$ be an affine $K$-variety and let
$P=(x_{1},...,x_{n})\in V\big(\e\overline {K}\,\big)\cap
V(K_{v}^{\le\rm{s}})$. Then each $x_i\in \overline {K}\cap
K_{v}^{\le\rm{s}}=K^{\le\rm{s}}$, whence $P\in V(K^{\le\rm{s}})$.
Thus $V(K^{\le\rm{s}})=V\big(\e\overline {K}\,\big)\cap
V(K_{v}^{\le\rm{s}})$, and the lemma is now clear since $A$ is
covered by affine $K$-varieties.
\end{proof}

If $v$ is a prime of $K$, we will write $\psi_{v}=\psi\times_{\spec
K}\spec K_{v}$. Since $H^{\le 1}(K^{\le\rm{s}},A)=H^{\le
1}(K^{\le\rm{s}}_{v},A)=0$, the exact sequence \eqref{seq} yields a
commutative diagram
\begin{equation}\label{diag}
\xymatrix{B(K^{\le\rm{s}})/\psi(A(K^{\le\rm{s}}))\ar[d]
\ar[r]^(.58){\sim}&H^{\le 1}(K^{\le\rm{s}},M)
\ar[d]\\
B(K_{v}^{\le\rm{s}})/\psi_{v}(A(K_{v}^{\le\rm{s}}))\ar[r]^(.58){\sim}
&H^{\le 1}(K^{\le\rm{s}}_{v},M).}
\end{equation}

\begin{lemma}\label{l:main}
Let $v$ be a prime of $K$. Then the canonical map
$$
B(K^{\le\rm{s}})/\psi(A(K^{\le\rm{s}}))\ra B(K_{v}^{\le\rm{s}})/
\psi_{v}(A(K_{v}^{\le\rm{s}}))
$$
is injective.
\end{lemma}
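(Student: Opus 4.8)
The plan is to reduce the injectivity statement to the statement about points proved in Lemma~\ref{l:insp}, by translating the quotient $B(K^{\le\rm{s}})/\psi(A(K^{\le\rm{s}}))$ back into the separable-point level using the isomorphisms in diagram~\eqref{diag}. Suppose $b\in B(K^{\le\rm{s}})$ represents a class in $B(K^{\le\rm{s}})/\psi(A(K^{\le\rm{s}}))$ that maps to zero in $B(K_{v}^{\le\rm{s}})/\psi_{v}(A(K_{v}^{\le\rm{s}}))$. This means there is a point $a_v\in A(K_{v}^{\le\rm{s}})$ with $\psi(a_v)=b$, and I must produce a point $a\in A(K^{\le\rm{s}})$ with $\psi(a)=b$. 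The existence of such an $a$ over the separable closure is exactly what I need to prove $b\in\psi(A(K^{\le\rm{s}}))$.

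\emph{First I would} pass to the algebraic closure. Since $\overline{K}$ is algebraically closed and $\psi\colon A\ra B$ is surjective, there exists a point $a\in A\big(\e\overline{K}\e\big)$ with $\psi(a)=b$; indeed, because $b\in B(K^{\le\rm{s}})\subset B\big(\e\overline{K}\e\big)$ and $\psi$ is surjective on $\overline{K}$-points (as $H^{\le 1}(\overline{K},M)=0$), such a preimage exists. The two preimages $a$ and $a_v$ both lie over $b$, so they differ by an element of the fiber, i.e. by a point of $M$. More precisely, viewing everything inside $A\big(\e\overline{K}_{\be v}\big)$ via the fixed embeddings, the points $a$ and $a_v$ satisfy $\psi(a)=\psi(a_v)=b$, so their difference lies in $M\big(\e\overline{K}_{\be v}\big)$.

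\emph{The key step} is to show that $a$ itself already lies in $A(K^{\le\rm{s}})$, using Lemma~\ref{l:insp}. I have $a\in A\big(\e\overline{K}\e\big)$ by construction. I want to also place $a$ in $A(K_{v}^{\le\rm{s}})$, for then $a\in A\big(\e\overline{K}\e\big)\cap A(K_{v}^{\le\rm{s}})=A(K^{\le\rm{s}})$ by the lemma, and since $\psi(a)=b$ this exhibits $b$ as the image of a separable point, proving the class is zero. To see $a\in A(K_{v}^{\le\rm{s}})$, note that $a-a_v\in M\big(\e\overline{K}_{\be v}\big)$; but $M$ is a finite $K$-group scheme, and I would argue that the relevant difference actually lies in $M(K_{v}^{\le\rm{s}})$ — this is the delicate point, since $M$ may have an infinitesimal (non-\'etale) part, so not every $\overline{K}_{\be v}$-point of $M$ is a $K_{v}^{\le\rm{s}}$-point. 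The cleanest route is to work with the class of $b$ in $H^{\le 1}(K^{\le\rm{s}}_{v},M)$ directly: the hypothesis says this class is trivial, and via the bottom isomorphism of~\eqref{diag} the lift $a_v$ is a genuine $K_{v}^{\le\rm{s}}$-point, while the top isomorphism packages $b$ as a class in $H^{\le 1}(K^{\le\rm{s}},M)$; one then transfers the equation $\psi(a)=b$ down.

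\emph{The main obstacle} I anticipate is precisely the control of the infinitesimal part of $M$ at the level of points over $K_{v}^{\le\rm{s}}$ versus $\overline{K}_{\be v}$. The separable-closure identity of Lemma~\ref{l:insp} handles the abelian-variety points $A(K^{\le\rm{s}})=A\big(\e\overline{K}\e\big)\cap A(K_{v}^{\le\rm{s}})$ cleanly, but the argument hinges on first knowing $a\in A(K_{v}^{\le\rm{s}})$, and this requires that the chosen preimage $a$ of the separable point $b$ can be taken to be $K_{v}^{\le\rm{s}}$-rational. I would secure this by choosing $a_v$ first as a $K_{v}^{\le\rm{s}}$-point lifting $b$ (which exists by hypothesis), and then arguing that a single global-over-$\overline{K}$ lift $a$ can be selected to coincide with $a_v$ locally, so that the intersection-of-fields argument applies verbatim. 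Once $a\in A(K^{\le\rm{s}})$ is established, the conclusion $b\in\psi(A(K^{\le\rm{s}}))$ and hence injectivity is immediate.
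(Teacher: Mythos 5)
Your overall strategy is the right one and you have correctly located the delicate point, but you do not close the gap, and the route you propose for closing it points in the wrong direction. You want to show that the global algebraic lift $a\in A\big(\overline{K}\big)$ of $b$ lies in $A(K_{v}^{\rm s})$, so that Lemma~\ref{l:insp} applies to $a$. This requires $a-a_{v}\in A(K_{v}^{\rm s})$, and that can genuinely fail: the difference $a-a_{v}$ is a $\overline{K}_{v}$-point of $M$, and the nonzero $\overline{K}$-points of a finite $K$-group scheme may all be inseparable over $K$ (for instance, the kernel of the additive map $x\mapsto x^{p^{2}}-t\e x^{p}$ on $\mathbb{G}_{a}$ over $\mathbb{F}_{p}(t)$ has $p$ points over $\overline{K}$, the nonzero ones generating the inseparable extension $K(t^{1/p})$ and its $(p-1)$-st roots). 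So ``$a\in A(K_{v}^{\rm s})$'' is not something you can arrange in general, and retreating to the class of $b$ in $H^{1}(K_{v}^{\rm s},M)$ does not help, because the triviality of that class is exactly the hypothesis you started from, not a new tool.

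The missing ingredient --- and the actual content of the paper's proof --- is the equality $M\big(\e\overline{K}_{v}\big)=M\big(\e\overline{K}\big)$: writing $M=\spec R$ with $R$ a finite $K$-algebra, any point $s\in\hom_{K_{v}}(K_{v}\otimes_{K}R,\overline{K}_{v})$ restricts to a $K$-algebra map $R\to\overline{K}_{v}$ whose image is a finite $K$-algebra, hence a finite field extension of $K$, hence contained in $\overline{K}$. Granting this, the correct move is the mirror image of yours: instead of pushing $a$ into $A(K_{v}^{\rm s})$, pull $a_{v}$ into $A\big(\overline{K}\big)$, since $a_{v}=a-(a-a_{v})$ with $a\in A\big(\overline{K}\big)$ and $a-a_{v}\in M\big(\e\overline{K}_{v}\big)=M\big(\e\overline{K}\big)\subset A\big(\overline{K}\big)$. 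Then $a_{v}\in A\big(\overline{K}\big)\cap A(K_{v}^{\rm s})=A(K^{\rm s})$ by Lemma~\ref{l:insp}, and $b=\psi(a_{v})\in\psi(A(K^{\rm s}))$. Your closing suggestion of ``selecting $a$ to coincide with $a_{v}$ locally'' is, after unwinding, precisely the assertion that $a_{v}$ is an algebraic point, so it presupposes the very fact you have not proved.
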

\begin{proof} Write $M=\spec R$, where $R$ is a finite $K$-algebra, and identify
$M\big(\e\overline {K}_{\be v}\big)$ with $\text{Hom}_{K_{\lbe
v}}\be(K_{\lbe v}\otimes_{K} R, \overline{K}_{\be v})$. If $s\in
M\big(\e\overline {K}_{\lbe v}\big)$, then the image of the
composition $\tilde{f}\colon R\ra K_{\lbe v}\otimes_{K} R
\overset{s}\ra\overline{K}_{\be v}$ is a finite $K$-algebra and so,
in fact, a finite field extension of $K$. Consequently, $\tilde{f}$
factors through some $f\in \text{Hom}_{K}\big(R,
\overline{K}\e\big)=M\big(\e\overline {K}\e\big)$. This implies that
$M\big(\e\overline {K}_{v}\big)= M\big(\e\overline {K}\big)$. Now
let $P\in B(K^{\le\rm{s}})\cap
\psi_{v}(A(K_{v}^{\le\rm{s}}))\subseteq B(K_{v}^{\le\rm{s}})$ and
let $Q\in A(K_{v}^{\le\rm{s}})$ be such that $P=\psi_{v}(Q)$. Since
$A\big(\e\overline {K}\e\big)\overset{\psi}\ra B\big(\e\overline
{K}\e\big) $ is surjective, there exists an $R\in A\big(\e\overline
{K}\e\big)$ such that $\psi (R)=P$. Then $R-Q\in M\big(\e\overline
{K}_v\e\big)= M\big(\e\overline {K}\e\big)$. This shows that $Q\in
A\big(\e\overline {K}\e\big)\cap
A(K_{v}^{\le\rm{s}})=A(K^{\le\rm{s}})$, by the previous lemma. Thus
$P=\psi(Q)\in \psi(A(K^{\le\rm{s}}))$, as desired.

\end{proof}

The above lemma and diagram \eqref{diag} show that the localization
map $H^{\le 1}(K^{\le\rm{s}},M)\ra H^1(K^{\le\rm{s}}_{\lbe v},M)$ is
injective. The main theorem is now immediate from the exact
commutative diagram
$$
\xymatrix{ 0\ar[r] & H^{\le 1}(G_{\lbe K},M)\ar[r]\ar[d] &
H^{1}(K,M)\ar[r]\ar[d] & H^{\le 1}(K^{\le\rm{s}},M)
\ar@{^{(}->}[d]\\
0\ar[r]& H^{\le 1}(G_{v},M)\ar[r]& H^{1}(K_{v},M)\ar[r] & H^{\le
1}(K^{\le\rm{s}}_{v},M),}
$$
whose rows are the inflation-restriction exact sequences in flat
cohomology \cite{S}, p.422, line -12.

\section{Applications}

Let $K$ and $M$ be as in the previous Section. We will write $K(M)$
for the subfield of $K^{\le\rm{s}}$ fixed by $\krn\!\be\left[\e
G_{\lbe K}\ra {\rm{Aut}}\big(M(K^{\le\rm{s}})\big)\right]$. We note
that the Hasse principle is known to hold for $M(K^{\le\rm{s}})$
under any of the following hypotheses:
\begin{enumerate}
\item[(a)] $\Gal\le(K(M)\lbe/K)\subseteq {\rm{Aut}}\big(M(K^{\le\rm{s}})\big)$
is cyclic. See \cite{adt},
Lemma I.9.3, p.118.
\item[(b)] $M(K^{\le\rm{s}})$ is a simple $G_{\lbe K}$-module such that
$p\e M(K^{\le\rm{s}})=0$ and $\Gal\le(K(M)\lbe/K)$ is a $p$-solvable
group, i.e., $\Gal\le(K(M)\lbe/K)$ has a composition series whose
factors of order divisible by $p$ are cyclic. See \cite{adt},
Theorem I.9.2(a), p.117.
\item[(c)] There exists a set $T$ of primes of $K$, containing the set $S$ of all
primes of $K$ which split completely in $K(M)$, such that
$T\setminus S$ has Dirichlet density zero and
$[K(M):K]={\rm{l.c.m.}}\{[K(M)_{v}\!:\! K_{v}] \colon v\in T\}$. See
\cite{nsw}, Theorem 9.1.9(iii), p.528.
\end{enumerate}

In this Section we focus on case (a) above when $M=A_{p^{\le m}}$ is
the $p^{\le m}$-torsion subgroup scheme of an abelian variety $A$
defined over $K$. More precisely, we are interested in the class of
abelian varieties $A$ such that $A_{p^{\le m}}\be (K^{\le\rm{s}})$
is cyclic, for then $\Gal\le(K(A_{p^{\le m}})\lbe/K)\hookrightarrow
{\rm{Aut}}\big(A_{p^{\le m}} ( K^{\le\rm{s}})\big)$ is cyclic as
well if $p$ is odd or $m\leq 2$ and (a) applies. Clearly, this class
contains all ordinary abelian varieties $A$ such that the associated
Kodaira-Spencer map has maximal rank since, as noted in Section 1,
$A_{p^{\le m}}\be (K^{\le\rm{s}})$ is in fact zero. To find more
examples, recall that $A_{p^{\le
m}}\be\big(\e\overline{K}\e\big)\simeq (\bz/p^{\le m}\bz)^f$ for
some integer $f$ (called the {\it $p$-rank of $A$}) such that $0\leq
f\leq \dim A$. Thus, if $f\leq 1$, then $A_{p^{\le m}}\be
(K^{\le\rm{s}})$ is cyclic. Clearly, the condition $f\leq 1$ holds
if $A$ is an elliptic curve, but there exist higher-dimensional
abelian varieties $A$ having $f\leq 1$. See \cite{P}, \S 4.

\begin{remark} Clearly, $\Gal\le(K(A_{p^{\le m}})\lbe/K)$ may be
cyclic even if $A_{p^{\le m}}\be (K^{\le\rm{s}})$ is not. For
example, let $k$ be the (finite) field of constants of $K$, let
$A_{0}$ be an abelian variety defined over $k$ and let
$A=A_{0}\times_{\spec k}\spec K$ be the constant abelian variety
over $K$ defined by $A_{0}$. Then
$A(K^{\le\rm{s}})_{\rm{tors}}=A_{0}\big(\e\overline{k}\e\big)$, and
it follows that $\Gal\le(K(A_{p^{\le
m}})\lbe/K)\simeq\Gal(k^{\e\prime}/k)$ for some finite extension
$k^{\e\prime}$ of $k$. Consequently $\Gal\le(K(A_{p^{\le
m}})\lbe/K)$ is cyclic and the Hasse principle holds for $A_{p^{\le
m}}(K^{\le\rm{s}})$.
\end{remark}

We will write $A\{p\}$ for the $p$-divisible group attached to $A$,
i.e., $A\{p\}=\varinjlim_{m} A_{p^{\le m}}$. If $B$ is an abelian
group, $B(p)=\bigcup_{m}\be B_{\e p^{\le m}}$ is the $p$-primary
component of its torsion subgroup, $B^{\wedge}=\varprojlim_{\,m}\be
B/p^{\le m}$ is the adic completion of $B$ and $T_{p}\e
B=\varprojlim_{m}\be B_{\e p^{\le m}}$ is the $p$-adic Tate module
of $B$. Further, if $B$ is a topological abelian group, $B^{D}$ will
denote $\text{Hom}_{\e\text{cont.}}(B,\bq/\e\bz)$ endowed with the
compact-open topology, where $\bq/\e\bz$ is given the discrete
topology.

Let $X$ denote the unique smooth, projective and irreducible curve
over the field of constants of $K$ having function field $K$. If $A$ is an abelian variety over $K$, we will write $\s A$ for the N\'{e}ron model of $A$ over $X$.

The following statement is immediate from the main theorem and the
above remarks.
\begin{proposition}\label{p:hasse}
Let $A$ be an abelian variety defined over $K$ and let $m$ be a
positive integer. Assume that $A_{p^{\le m}}\be( K^{\le\rm{s}})$ is
cyclic. Assume, in addition, that $m\leq 2$ if $p=2$. Then the
localization map in flat cohomology
$$
H^{\le 1}(K,A_{p^{\le m}}\be)\ra\prod_{{\rm{all}}\;v} H^{\e
1}(K_{v},A_{p^{\le m}}\be)
$$
is injective.\qed
\end{proposition}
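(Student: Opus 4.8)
The plan is to derive Proposition~\ref{p:hasse} directly from the Main Theorem together with the hypotheses recorded in case (a) of the preceding discussion. By the Main Theorem applied to $M=A_{p^{\le m}}$, the kernel $\Sha^{\le 1}(K,A_{p^{\le m}})$ of the localization map in flat cohomology is isomorphic to the Galois-cohomological kernel $\Sha^{1}(G_{\lbe K},A_{p^{\le m}})$, where the latter refers to the $G_{\lbe K}$-module $A_{p^{\le m}}(K^{\le\rm{s}})$. Thus the injectivity of the flat localization map is equivalent to the vanishing of $\Sha^{1}(G_{\lbe K},A_{p^{\le m}})$, i.e.\ to the validity of the Hasse principle for the finite $G_{\lbe K}$-module $A_{p^{\le m}}(K^{\le\rm{s}})$. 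This reduces the entire problem to a statement in ordinary Galois cohomology.

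First I would verify that the hypotheses force $\Gal\le(K(A_{p^{\le m}})\lbe/K)$ to be cyclic. By definition $K(A_{p^{\le m}})$ is the fixed field of the kernel of $G_{\lbe K}\ra {\rm{Aut}}\big(A_{p^{\le m}}(K^{\le\rm{s}})\big)$, so $\Gal\le(K(A_{p^{\le m}})\lbe/K)$ embeds into ${\rm{Aut}}\big(A_{p^{\le m}}(K^{\le\rm{s}})\big)$. Under the assumption that $A_{p^{\le m}}(K^{\le\rm{s}})$ is cyclic, it is a cyclic group annihilated by $p^{\le m}$, hence isomorphic to $\bz/p^{\le k}\bz$ for some $k\leq m$, and its automorphism group is $(\bz/p^{\le k}\bz)^{\times}$. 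This group is cyclic precisely when $p$ is odd or when $p^{\le k}\in\{2,4\}$, which is exactly covered by the standing hypothesis that $m\leq 2$ when $p=2$ (the subtle point being that $k\leq m\leq 2$ keeps us in the range $2^{\le k}\le 4$ where $(\bz/2^{\le k}\bz)^{\times}$ is still cyclic). Hence $\Gal\le(K(A_{p^{\le m}})\lbe/K)$ is cyclic in all the allowed cases.

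With cyclicity of $\Gal\le(K(A_{p^{\le m}})\lbe/K)$ established, I would invoke case (a) of the discussion above, which states (citing \cite{adt}, Lemma I.9.3) that the Hasse principle holds for $M(K^{\le\rm{s}})$ whenever $\Gal\le(K(M)\lbe/K)$ is cyclic. Applying this with $M=A_{p^{\le m}}$ yields $\Sha^{1}(G_{\lbe K},A_{p^{\le m}})=0$. Feeding this back through the isomorphism supplied by the Main Theorem gives $\Sha^{\le 1}(K,A_{p^{\le m}})=0$, which is precisely the injectivity asserted in the proposition.

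I expect the only genuine point requiring care to be the elementary group-theoretic step identifying exactly when $\big(\bz/p^{\le k}\bz\big)^{\times}$ is cyclic, and in particular confirming that the hypothesis $m\leq 2$ for $p=2$ is both necessary and sufficient to guarantee cyclicity across all $k\leq m$; everything else is a formal consequence of the Main Theorem and the already-cited Hasse-principle criterion, so no substantial obstacle remains once that verification is in place.
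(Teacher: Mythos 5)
Your proposal is correct and follows exactly the route the paper intends: the Main Theorem reduces the statement to the Hasse principle for the Galois module $A_{p^{\le m}}(K^{\le\rm{s}})$, and the cyclicity of $\Gal\le(K(A_{p^{\le m}})/K)$ (via its embedding into the automorphism group of a cyclic $p$-group, cyclic when $p$ is odd or $m\leq 2$) lets one invoke case (a), i.e.\ Milne's Lemma I.9.3. The paper states this as ``immediate from the main theorem and the above remarks,'' and your write-up simply makes those remarks explicit.
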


The next lemma confirms a long-standing and widely-held expectation.

\begin{lemma} $H^{\e 2}(K,A)=0$.
\end{lemma}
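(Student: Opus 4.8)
The plan is to show $H^2(K,A)=0$ for an abelian variety $A$ over a global function field $K$ by exploiting the same Raynaud-type embedding $0\to M\to A\to B\to 0$ already introduced, together with known vanishing results for the cohomology of the $p$-divisible group and the prime-to-$p$ torsion. The starting point is to reduce the problem to understanding $H^2(K,A)$ in terms of the torsion subgroup schemes. Since $A(\overline K)$ is a divisible group and the groups $H^r(K,A)$ should be torsion for $r\geq 1$ (being built from Galois/flat cohomology of a torsion-free-modulo-torsion object over a field of cohomological dimension considerations), the natural first move is to decompose $H^2(K,A)$ into its $\ell$-primary parts for all primes $\ell$, including $\ell=p$.

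**First I would** handle the prime-to-$p$ part. For $\ell\neq p$ the multiplication-by-$\ell^m$ map on $A$ is étale, so flat cohomology agrees with Galois cohomology, and the classical duality theory for abelian varieties over global function fields (as in \cite{adt}, the function-field analogue of the arithmetic duality theorems) gives that $H^2(G_K,A)(\ell)=0$; indeed $H^2(G_K,A)$ is already known to vanish in the prime-to-$p$ setting by the standard global duality argument, since $H^2(G_K,A_{\ell^m})$ is dual to $H^0$ of the dual abelian variety's torsion and the relevant map is controlled. **Then I would** attack the $p$-primary part, which is the genuinely new content. Here I would use the $p$-divisible group $A\{p\}$ defined in the excerpt and the exact sequence relating $H^2(K,A_{p^m})$, $H^2(K,A)$, and the connecting maps from multiplication by $p^m$. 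The key is to pass to the limit: $H^2(K,A)(p)$ is a quotient of $\varinjlim_m H^2(K,A_{p^m})$, and one controls the latter via flat duality for finite group schemes over the curve $X$ together with the fact that $H^2(K,A_{p^m})$ injects into a product of local terms that can be made to vanish.

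**The hard part will be** the $p$-primary piece, specifically establishing vanishing of $H^2(K,A_{p^m})$ or its limit in flat cohomology, since here one cannot fall back on étale cohomology and must instead invoke flat duality over the smooth projective curve $X$ (Néron models $\s A$ enter through this) relating $H^2(X,\s A_{p^m})$-type groups to $H^0$ of dual objects, and then compare the cohomology over $X$ with that over $\spec K$ via localization sequences. I would structure the argument as: (i) establish $H^2(K,A_{p^m})\simeq$ a group that is dual (via the local-global flat duality pairing) to a vanishing $H^0$-type term, using the fact that $H^3(K,\mathbb G_m)$-analogues and the Brauer-group input force the cokernel terms to vanish; (ii) take the direct limit over $m$; (iii) combine with the prime-to-$p$ result to conclude $H^2(K,A)=0$. **The main obstacle** I anticipate is making the flat-duality input precise over a field of positive characteristic where no étale comparison is available, and correctly identifying the limit $\varinjlim_m H^2(K,A_{p^m})$ with $H^2(K,A\{p\})$ and thence with $H^2(K,A)(p)$; this requires care with the exact sequence $0\to A(p)\to A\xrightarrow{p^m} A\to 0$-type decompositions and with the continuity of cohomology under the inductive limit defining $A\{p\}$.
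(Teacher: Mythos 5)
There is a genuine gap, concentrated in your step (i) for the $p$-primary part. You propose to show that $H^{\e 2}(K,A_{p^{\le m}})$ (or its direct limit) is ``dual to a vanishing $H^{\le 0}$-type term'' and that it ``injects into a product of local terms that can be made to vanish.'' Neither claim is true for fixed $m$: the local groups $H^{\e 2}(K_{v},A_{p^{\le m}})$ are dual to $A^{t}_{p^{\le m}}(K_{v})$ and are generally nonzero, and the kernel $\Sha^{\e 2}(A_{p^{\le m}})$ of the localization map is dual (by flat Poitou--Tate duality) to $\Sha^{\e 1}(A^{t}_{p^{\le m}})$, which also need not vanish for any individual $m$ --- it receives contributions from $A^{t}(K)/p^{\le m}$ and from the $p^{\le m}$-torsion of $\Sha^{\e 1}(A^{t})$. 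What the paper actually proves is the much more delicate statement that the \emph{inverse limit} $\varprojlim_{n}\Sha^{\e 1}(A^{t}_{n})$ vanishes, and this is the real content of the lemma; your plan has no mechanism for producing it. The paper's route is: (1) reduce to $\Sha^{\e 2}(A)=0$ using the local vanishing $H^{\e 2}(K_{v},A)=0$ (a step absent from your plan); (2) identify $\Sha^{\e 2}(A)$ with $\varinjlim_{n}\Sha^{\e 2}(A_{n})$, hence by duality with the Pontryagin dual of $\varprojlim_{n}\Sha^{\e 1}(A^{t}_{n})$; (3) identify $\Sha^{\e 1}_{U}(A^{t}_{n})$ with the group $\overline{H}^{\e 1}(U,\s A^{\le t}_{n})$ computed from the N\'eron model over an open $U\subset X$, via the localization sequence; and (4) invoke Milne's results on congruence subgroups of abelian varieties to get $\varprojlim_{n}\overline{H}^{\e 1}(U,\s A^{\le t}_{n})=0$. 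Step (4) is the essential external input, and nothing in your outline substitutes for it.

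Two smaller points. First, the identification $H^{\e 2}(K,A)\simeq\varinjlim_{n}H^{\e 2}(K,A_{n})$ is an isomorphism, not merely a surjection onto a quotient: the term $H^{\e 1}(K,A)/n$ dies in the limit because $H^{\e 1}(K,A)$ is torsion; you should make this precise rather than work with ``a quotient of'' the limit. Second, your split into prime-to-$p$ and $p$-primary parts is harmless but unnecessary --- the paper's argument treats all $n$ uniformly, since flat Poitou--Tate duality and the N\'eron-model comparison work for arbitrary finite flat $A_{n}$; the only place the characteristic matters is that the $p$-part is the previously unknown case.
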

\begin{proof} Since $H^{\le 2}(K_{v},A\be)=0$ for every $v$ \cite{adt}, Theorem III.7.8, p.285, it suffices to check that
$\Sha^{\le 2}(A)=0$. For any integer $n$, there exists a canonical exact sequence of flat cohomology groups
$$
0\ra H^{\le 1}(K,A\be)/n\ra H^{\le 2}(K,A_{n}\be)\ra
H^{\le 2}(K,A\be)_{n}\ra 0.
$$
Since the Galois cohomology groups $H^{\le i}(K,A\be)$ are torsion in degrees $i\geq 1$ and $\bq/\bz$ is divisible, the direct limit over $n$ of the above exact sequences yields a canonical isomorphism
$H^{\le 2}(K,A\be)=\varinjlim_{\e n}H^{\le 2}(K,A_{n}\be)$. An analogous isomorphism exists over $K_{v}$ for each prime $v$ of $K$, and we conclude that $\Sha^{2}(A)$ is canonically isomorphic to $\varinjlim_{\e n}\Sha^{2}(A_{n})$. Now, by Poitou-Tate duality \cite{adt}, Theorem I.4.10(a), p.57, and \cite{ga09}, Theorem 4.8, the latter group is canonically isomorphic to the Pontryagin dual of $\varprojlim_{n}\Sha^{1}(A^{t}_{n})$, where $A^{t}$ is the dual abelian variety of $A$. Thus, it suffices to show that $\varprojlim_{n}\Sha^{1}(A^{t}_{n})=0$. Let $U$ be the largest open subscheme of $X$ such that $A^{t}_{n}$ extends to a finite and flat $U$-group scheme $\s A^{\le t}_{n}$. For each closed point $v$ of $U$, let $\mathcal O_{v}$ denote the completion of the local ring of $U$ at $v$. Now let $V$ be any nonempty open subscheme of $U$. By the computations at the beginning of \cite{adt}, III.7, p.280, and the localization sequence \cite{mil80}, Proposition III.1.25, p.92, there exists an exact sequence
$$
0\ra H^{\le 1}(U,\s A^{\le t}_{n})\ra H^{\le 1}(V,\s A^{\le t}_{n})\ra\bigoplus_{v\in U\e\setminus V}H^{\le 1}(K_{v}, A^{t}_{n})/H^{\le 1}(\mathcal O_{v}, \s A^{\le t}_{n}).
$$
Taking the direct limit over $V$ in the above sequence and using
\cite{ga09}, Lemma 2.3, we obtain an exact sequence
$$
0\ra H^{\le 1}(U,\s A^{\le t}_{n})\ra H^{\le 1}(K,A^{t}_{n})\ra\prod_{v\in U}H^{\le 1}(K_{v}, A^{t}_{n})/H^{\le 1}(\mathcal O_{v}, \s A^{\le t}_{n}),
$$
where the product extends over all closed points of $U$. The exactness of the last sequence shows the injectivity of the right-hand vertical map in the diagram below
$$
\xymatrix{ 0\ar[r] & H^{\le 1}(U,\s A^{\le t}_{n})\ar[r]\ar[d] &
H^{\le 1}(K,A^{t}_{n})\ar[r]\ar[d] & H^{\le 1}(K,A^{t}_{n})/H^{\le 1}(U,\s A^{\le t}_{n})
\ar@{^{(}->}[d]\\
0\ar[r]& \displaystyle\prod_{v\in U}H^{\le 1}(\mathcal O_{v}, \s A^{\le t}_{n})\ar[r]& \displaystyle\prod_{v\in U}H^{\le 1}(K_{v}, A^{t}_{n})\ar[r] & \displaystyle\prod_{v\in U}H^{\le 1}(K_{v}, A^{t}_{n})/H^{\le 1}(\mathcal O_{v},\s A^{\le t}_{n}).}
$$
We conclude that $\Sha^{1}_{U}(A^{t}_{n}):=\krn[\e H^{\le 1}(K,A^{t}_{n})\ra\prod_{\e v\in U}H^{\le 1}(K_{v}, A^{t}_{n})]$ equals
$$
\overline{H}^{\e 1}(U,\s A^{\le t}_{n}):=\krn\!\left[\e H^{\le 1}(U,\s A^{\le t}_{n})\ra\prod_{v\in U}H^{\le 1}(\mathcal O_{v},\s A^{\le t}_{n})\right].
$$
Now, it is shown in \cite{mil72}, Propositions 5 and 6, that
$\varprojlim_{n}\overline{H}^{\e 1}(U,\s A^{\le t}_{n})=0$, whence $\varprojlim_{n}\Sha^{1}_{U}(A^{t}_{n})=0$. Now the exact sequence $$
0\ra \Sha^{1}(A^{t}_{n})\ra \Sha^{1}_{U}(A^{t}_{n})\ra\displaystyle\prod_{v\notin U}H^{\le 1}(K_{v}, A^{t}_{n})
$$
shows that $\varprojlim_{n}\Sha^{1}(A^{t}_{n})=0$, as desired.
\end{proof}

\begin{remark} With the notation of the above proof, the kernel-cokernel exact sequence \cite{adt}, Proposition I.0.24, p.16, for the pair of maps
$$
H^{\le 1}(K,A^{t}_{n})\ra\displaystyle
\bigoplus_{\text{all $v$}}H^{\le 1}(K_{v},A^{t}_{n})\ra
\displaystyle\bigoplus_{\text{all $v$}}H^{\le 1}(K_{v},A^{t})
$$
yields an exact sequence
$$
0\ra \Sha^{1}(A^{t}_{n})\ra\text{Sel}(A^{t})_{n}\ra\displaystyle
\bigoplus_{\text{all $v$}}H^{0}(K_{v},A^{t})/n,
$$
where $\text{Sel}(A^{t})_{n}:=\krn[H^{\le 1}(K,A^{t}_{n})\ra
\bigoplus_{\text{all $v$}}H^{\le 1}(K_{v},A^{t})]$. Since $\varprojlim_{n}\Sha^{1}(A^{t}_{n})=0$ as shown above, the inverse limit over $n$ of the preceding sequences yields an injection
$$
\varprojlim_{n}\text{Sel}(A^{t})_{n}\hookrightarrow
\prod_{\text{all $v$}}\varprojlim_{n}H^{0}(K_{v},A^{t})/n.
$$
This injectivity was claimed in \cite{gat07}, p.300, line -8, but the ``proof\e" given there is inadequate and should be replaced by the above one.
\end{remark}

\begin{proposition}\label{c:h2}
Let $A$ be an abelian variety over $K$ and let $A^{t}$ be the
corresponding dual abelian variety. Assume that $A^{t}_{p^{\le
m}}\be(K^{\le\rm{s}})$ is cyclic, where $m$ is a positive integer
such that $m\leq 2$ if $p=2$. Then the localization maps
$$
H^{\le 2}(K,A_{p^{\le m}}\be)\ra\bigoplus_{{\rm{all}}\;v} H^{\le
2}(K_{v},A_{p^{\le m}}\be)
$$
and
$$
H^{\le 1}(K,A\be)/p^{\le m}\ra\bigoplus_{{\rm{all}}\;v} H^{\le
1}(K_{v},A\be)/p^{\le m}
$$
are injective.
\end{proposition}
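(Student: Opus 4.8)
The plan is to derive both injectivity statements from Proposition~\ref{p:hasse}, applied to the \emph{dual} abelian variety $A^{t}$, combined with the vanishing $H^{\le 2}(K,A)=0$ proved in the lemma above and the Poitou--Tate duality already used there.

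First I would apply Proposition~\ref{p:hasse} to $A^{t}$. Since $A^{t}_{p^{\le m}}\be(K^{\le\rm{s}})$ is cyclic and $m\leq 2$ when $p=2$, the hypotheses of that proposition are met, so the map $H^{\le 1}(K,A^{t}_{p^{\le m}})\ra\prod_{\text{all }v}H^{\le 1}(K_{v},A^{t}_{p^{\le m}})$ is injective; equivalently $\Sha^{1}(A^{t}_{p^{\le m}})=0$. The Weil pairing identifies $A^{t}_{p^{\le m}}$ with the Cartier dual of $A_{p^{\le m}}$, so the Poitou--Tate duality invoked in the proof of the preceding lemma (\cite{adt}, Theorem~I.4.10(a), and \cite{ga09}, Theorem~4.8) yields a perfect pairing of finite groups between $\Sha^{2}(A_{p^{\le m}})$ and $\Sha^{1}(A^{t}_{p^{\le m}})$. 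Hence $\Sha^{2}(A_{p^{\le m}})=0$. Since every class in $H^{\le 2}(K,A_{p^{\le m}})$ is trivial at almost all $v$, the kernel of the localization map into $\bigoplus_{v}$ agrees with that into $\prod_{v}$, and this vanishing is exactly the injectivity of the first map.

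For the second map I would invoke the long exact flat cohomology sequence attached to multiplication by $p^{\le m}$ on $A$, which gives the short exact sequence $0\ra H^{\le 1}(K,A)/p^{\le m}\ra H^{\le 2}(K,A_{p^{\le m}})\ra H^{\le 2}(K,A)_{p^{\le m}}\ra 0$. By the lemma above $H^{\le 2}(K,A)=0$, so the connecting map is an isomorphism $H^{\le 1}(K,A)/p^{\le m}\simeq H^{\le 2}(K,A_{p^{\le m}})$. The analogous sequence over each $K_{v}$, together with $H^{\le 2}(K_{v},A)=0$ (\cite{adt}, Theorem~III.7.8), gives $H^{\le 1}(K_{v},A)/p^{\le m}\simeq H^{\le 2}(K_{v},A_{p^{\le m}})$. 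As these isomorphisms are natural with respect to restriction, the second localization map is identified with the first one and therefore is injective as well.

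The proof is largely formal, and the only delicate point is the use of Poitou--Tate duality at the finite level $n=p^{\le m}$: one must be sure that it pairs $\Sha^{2}(A_{p^{\le m}})$ against $\Sha^{1}$ of the \emph{Cartier dual} $A^{t}_{p^{\le m}}$ rather than against $A_{p^{\le m}}$ itself, which is precisely why the cyclicity hypothesis is placed on $A^{t}$ and not on $A$. The remaining ingredients --- the two Kummer sequences and the compatibility of the connecting homomorphisms with localization --- are routine.
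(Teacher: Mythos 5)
Your proposal is correct and follows essentially the same route as the paper: the first injectivity comes from Proposition \ref{p:hasse} applied to $A^{t}$ together with the Poitou--Tate pairing between $\Sha^{\le 2}(K,A_{p^{\le m}})$ and $\Sha^{1}(K,A^{t}_{p^{\le m}})$, and the second is identified with the first via the Kummer sequences and the vanishing of $H^{\le 2}(K,A)$ and $H^{\le 2}(K_{v},A)$. Your remark explaining why the cyclicity hypothesis must be placed on $A^{t}$ is exactly the point the paper relies on.
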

\begin{proof} The injectivity of the first map is immediate from Proposition 3.2 and
Poitou-Tate duality \cite{ga09}, Theorem 4.8. On the other hand, the
lemma and the long exact flat cohomology sequence associated to
$0\ra A_{p^{\le m}}\ra A\overset{p^{\le m}}\ra A\ra 0$ over $K$ and
over $K_{v}$ for each $v$ identifies the second map of the statement
with the first, thereby completing the proof.
\end{proof}

\begin{remark} When $A$ is an elliptic curve over a number field and $p$ is any prime,
the injectivity of the second map in the above proposition was first
established in \cite{cas62}, Lemma 6.1, p.107.
\end{remark}

Let $\prod_{\e\text{all}\;
v}^{\e\prime} H^1(K_v,A_{p^{\le m}}\!)$ denote the restricted
product of the groups $H^{\le 1}(K_{v},A_{p^{\le m}}\!)$ with
respect to the subgroups $H^{\le 1}(\mathcal O_{v}, \s A_{p^{\le
m}}\be)$.

\begin{proposition}\label{p:tate}
Let $A$ be an abelian variety over $K$ and let $m$ be a positive
integer such that $m\leq 2$ if $p=2$. Assume that both $A_{p^{\le
m}}\be(K^{\le\rm{s}})$ and $A^{t}_{p^{\le m}}\be(K^{\le\rm{s}})$ are
cyclic. Then there exist exact sequences
$$
0\ra A_{p^{\le m}}\be(K)\ra \prod_{{\rm{all}}\; v} A_{p^{\le
m}}\be(K_v) \ra H^{\le 2}(K, A^{t}_{p^{\le m}}\be)^D\ra 0,
$$
$$
0\ra H^{\le 1}(K, A_{p^{\le m}}\be)\ra\underset{{\rm{all}}\;
v}\prod^{\e\prime} H^{\le 1}(K_v,A_{p^{\le m}}\be)\ra H^{\le 1}(K,
A^t_{p^{\le m}}\be)^D\ra 0
$$
and
$$
0\ra H^{\le 2}(K, A_{p^{\le m}}\be)\ra\bigoplus_{{\rm{all}}\; v}
H^{\le 2}(K_{v},A_{p^{\le m}}\be)\ra A^{t}_{p^{\le m}}\be(K)\ra 0.
$$
\end{proposition}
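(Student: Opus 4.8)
The plan is to realize the three asserted sequences as the three short exact constituents of the nine-term Poitou--Tate sequence attached to the finite flat $K$-group scheme $M=A_{p^{\le m}}$. I would begin by recording that the Weil pairing $A_{p^{\le m}}\times A^{t}_{p^{\le m}}\ra\mu_{p^{\le m}}$ is a perfect pairing of finite flat $K$-group schemes, so that $A^{t}_{p^{\le m}}$ is the Cartier dual of $A_{p^{\le m}}$. Poitou--Tate duality in the form of \cite{ga09}, Theorem 4.8, then supplies an exact sequence whose global terms in degrees $0,1,2$ are $A_{p^{\le m}}(K)$, $H^{\le 1}(K,A_{p^{\le m}})$ and $H^{\le 2}(K,A_{p^{\le m}})$; whose local terms are the full product $\prod_{v}A_{p^{\le m}}(K_{v})$, the restricted product $\prod^{\e\prime}_{v}H^{\le 1}(K_{v},A_{p^{\le m}})$ and the direct sum $\bigoplus_{v}H^{\le 2}(K_{v},A_{p^{\le m}})$; and whose remaining terms are the Pontryagin duals $H^{\le 2}(K,A^{t}_{p^{\le m}})^{D}$, $H^{\le 1}(K,A^{t}_{p^{\le m}})^{D}$ and $A^{t}_{p^{\le m}}(K)^{D}$. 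These are exactly the groups occurring in the three displayed sequences, once one observes that the finite group $A^{t}_{p^{\le m}}(K)^{D}$ is (non-canonically) isomorphic to $A^{t}_{p^{\le m}}(K)$.

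Next I would isolate the two ``long-range'' connecting homomorphisms $\delta^{0}\colon H^{\le 2}(K,A^{t}_{p^{\le m}})^{D}\ra H^{\le 1}(K,A_{p^{\le m}})$ and $\delta^{1}\colon H^{\le 1}(K,A^{t}_{p^{\le m}})^{D}\ra H^{\le 2}(K,A_{p^{\le m}})$ of this sequence, and show that both vanish. By exactness, the image of $\delta^{0}$ equals $\Sha^{\le 1}(K,A_{p^{\le m}})$ and the image of $\delta^{1}$ equals $\krn\!\be\big[\e H^{\le 2}(K,A_{p^{\le m}})\ra\bigoplus_{v}H^{\le 2}(K_{v},A_{p^{\le m}})\big]$. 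The former vanishes by Proposition~\ref{p:hasse} applied to $A$, since $A_{p^{\le m}}(K^{\le\rm{s}})$ is cyclic and $m\leq 2$ if $p=2$; the latter is precisely the injectivity asserted in the first part of Proposition~\ref{c:h2}, which holds because $A^{t}_{p^{\le m}}(K^{\le\rm{s}})$ is cyclic and $m\leq 2$ if $p=2$. This is the step in which both cyclicity hypotheses of the statement are used: one bounds $\Sha^{\le 1}$ for $A_{p^{\le m}}$ directly, while the other bounds it for $A^{t}_{p^{\le m}}$ and, through Poitou--Tate duality, controls $H^{\le 2}$ of $A_{p^{\le m}}$.

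Finally, with $\delta^{0}=0$ and $\delta^{1}=0$ in hand, exactness of the nine-term sequence forces the map $\prod_{v}A_{p^{\le m}}(K_{v})\ra H^{\le 2}(K,A^{t}_{p^{\le m}})^{D}$ to be surjective, the localization map $H^{\le 1}(K,A_{p^{\le m}})\ra\prod^{\e\prime}_{v}H^{\le 1}(K_{v},A_{p^{\le m}})$ to be injective, the map $\prod^{\e\prime}_{v}H^{\le 1}(K_{v},A_{p^{\le m}})\ra H^{\le 1}(K,A^{t}_{p^{\le m}})^{D}$ to be surjective, and the map $H^{\le 2}(K,A_{p^{\le m}})\ra\bigoplus_{v}H^{\le 2}(K_{v},A_{p^{\le m}})$ to be injective; reading off the three resulting segments yields the three short exact sequences of the statement. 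I expect the main obstacle to lie not in this splitting argument, which is formal once the two groups above are known to vanish, but in confirming that \cite{ga09}, Theorem 4.8, indeed delivers the complete nine-term sequence---with local terms appearing as a full product, a restricted product and a direct sum in degrees $0$, $1$ and $2$ respectively, rather than only the $\Sha$-dualities invoked in the earlier proofs---and in matching these local terms precisely against those in the statement.
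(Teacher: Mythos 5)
Your proposal is correct and is essentially the paper's own argument: the paper likewise deduces the result by splitting the nine-term Poitou--Tate exact sequence in flat cohomology into three short exact pieces, using Proposition~\ref{p:hasse} and the first part of Proposition~\ref{c:h2} to kill the two connecting maps. The only quibble is the reference: the full nine-term sequence is \cite{ga09}, Theorem 4.11 (Theorem 4.8 is the $\Sha$-duality pairing), a point you yourself flagged as needing confirmation.
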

\begin{proof} This is immediate from Propositions 3.2 and 3.5 and the Poitou-Tate
exact sequence in flat-cohomology \cite{ga09}, Theorem 4.11.
\end{proof}

Finally, set
$$
\Sel(A)_{p^{\le m}}=\krn\!\left[H^{1}(K,A_{p^{\le m}})\ra
\bigoplus_{\text{all $v$}}H^{1}(K_{v},A)\right]
$$
and define $T_{p}\Sel(A)=\varprojlim_{\e m}\Sel(A)_{p^{\le m}}$.
Further, recall the group
$$
\Sha^{1}(A)=\krn\!\left[H^{1}(K,A)\ra\bigoplus_{\text{all
$v$}}H^{1}(K_{v},A)\right].
$$
Now define $H^{\le 1}\big(K, T_{p}A^{t}\e\big)=\varprojlim_{\e
m}H^{\le 1}(K, A^t_{p^{\le m}}\be)$.

\begin{corollary} Under the hypotheses of the proposition, there
exist canonical exact sequences
$$
0\ra \Sha^{1}(A)(p)\ra\frac{\prod_{\e {\rm{all}}\,
v}A(K_{v})\otimes\bq_{\le p}/\bz_{\e p}}{A(K)\otimes\bq_{\le
p}/\bz_{\e p}}\ra H^{\le 1}(K,
T_{p}A^{t})^{D}\ra(T_{p}\Sel(A^{t}))^D\ra 0,
$$
$$
0\ra T_{p}\Sha^{1}(A)\ra\left(\e\textstyle\prod_{\e {\rm{all}}\,
v}A(K_{v})^{\wedge}\,\right)/A(K)^{\wedge}\ra H^{\le 1}(K,
A^t\{p\}\be)^{D}
$$
and
$$
0\ra T_{p}\Sel(A)\ra\prod_{\e {\rm{all}}\, v}A(K_{v})^{\wedge}\ra
H^{\le 1}(K, A^t\{p\}\be)^{D}.
$$
\end{corollary}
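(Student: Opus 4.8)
The plan is to deduce the three sequences from the three exact sequences of Proposition~\ref{p:tate} by passing to the limit over $m$, after first recasting them in terms of Mordell--Weil and Selmer/Shafarevich--Tate groups by means of Kummer theory. First I would record, for each $m$ and each $v$, the global Kummer sequence coming from $0\ra A_{p^m}\ra A\overset{p^m}\ra A\ra 0$ together with its local analogues; these identify $A(K)/p^m$ and $A(K_v)/p^m$ with the kernels of $H^1(K,A_{p^m})\ra H^1(K,A)$ and $H^1(K_v,A_{p^m})\ra H^1(K_v,A)$. Consequently $\Sel(A)_{p^m}=H^1(K,A_{p^m})\cap\prod_v A(K_v)/p^m$ inside the restricted product, and $\Sel(A)_{p^m}/(A(K)/p^m)$ is the $p^m$-torsion $\Sha^1(A)_{p^m}$ of $\Sha^1(A)$. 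Since $\Sha^1(A_{p^m})=0$ by Proposition~\ref{p:hasse}, the second sequence of Proposition~\ref{p:tate} shows that $\Sel(A)_{p^m}$ injects into $\prod_v A(K_v)/p^m$ and equals $\krn[\,\prod_v A(K_v)/p^m\ra H^1(K,A^t_{p^m})^D\,]$. Feeding the local orthogonality of Kummer images under Tate duality into the Poitou--Tate sequence of \cite{ga09} upgrades this to the four-term exact sequence
\begin{equation}\label{eq:4term}
0\ra\Sel(A)_{p^m}\ra\prod_v A(K_v)/p^m\ra H^1(K,A^t_{p^m})^D\ra\Sel(A^t)_{p^m}^D\ra 0 .
\end{equation}

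For the last two sequences of the corollary I would apply $\varprojlim_m$ to the three-term left-exact truncation of \eqref{eq:4term}. Inverse limits commute with the product over $v$, and by Pontryagin duality $\varprojlim_m H^1(K,A^t_{p^m})^D=\bigl(\varinjlim_m H^1(K,A^t_{p^m})\bigr)^D=H^1(K,A^t\{p\})^D$; hence left-exactness of $\varprojlim$ turns the truncation directly into the third sequence, with middle term $\varprojlim_m\prod_v A(K_v)/p^m=\prod_v A(K_v)^\wedge$ and kernel $\varprojlim_m\Sel(A)_{p^m}=T_p\Sel(A)$. For the second sequence I would first divide the middle term by the diagonal copy of $A(K)/p^m$, obtaining $0\ra\Sha^1(A)_{p^m}\ra(\prod_v A(K_v)/p^m)/(A(K)/p^m)\ra H^1(K,A^t_{p^m})^D$, and then take $\varprojlim_m$. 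The decisive finiteness input here is that $A(K)$ is finitely generated --- by the Lang--N\'eron theorem and the finiteness of the field of constants --- so the system $\{A(K)/p^m\}$ has surjective transition maps and $\varprojlim_m^1 A(K)/p^m=0$; this is precisely what identifies $\varprojlim_m(\prod_v A(K_v)/p^m)/(A(K)/p^m)$ with $(\prod_v A(K_v)^\wedge)/A(K)^\wedge$ and yields the second sequence.

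For the first sequence I would pass instead to the direct limit in the full sequence \eqref{eq:4term}, using that $\varinjlim$ is exact. Pontryagin duality now gives $\varinjlim_m H^1(K,A^t_{p^m})^D=H^1(K,T_pA^t)^D$ and $\varinjlim_m\Sel(A^t)_{p^m}^D=(T_p\Sel(A^t))^D$, while $\varinjlim_m\Sel(A)_{p^m}$ is the $p$-primary Selmer group; dividing the first two terms by $A(K)\otimes\bq_p/\bz_p=\varinjlim_m A(K)/p^m$ replaces this by $\Sha^1(A)(p)$ and produces the asserted four-term sequence. The hard part is the middle term. Unlike the inverse-limit case, direct limits do not commute with the infinite product over $v$; moreover, in characteristic $p$ the local points are far from $p$-divisible --- the formal-group contribution makes $A(K_v)\otimes\bq_p/\bz_p$ large for every $v$ --- so one cannot simply quote $\varinjlim_m\prod_v=\prod_v\varinjlim_m$. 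Identifying $\varinjlim_m\prod_v A(K_v)/p^m$ with $\prod_v A(K_v)\otimes\bq_p/\bz_p$ is therefore the main obstacle, and I expect it to require a separate argument exploiting the restricted-product structure inherited from Proposition~\ref{p:tate} (or a direct analysis of the quotient by the Mordell--Weil group).

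A secondary point common to all three sequences is to check that the limit maps are the Pontryagin duals of the natural inclusions $T_p\Sel\hookrightarrow H^1(K,T_pA^t)$ and their analogues; this is where the compatibility across levels $m$ of the Poitou--Tate pairings of \cite{ga09} must be invoked, after which establishing \eqref{eq:4term} itself from local Tate duality is routine.
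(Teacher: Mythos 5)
Your strategy coincides with the paper's: both proofs extract a four-term exact sequence at each level $m$ from the second sequence of Proposition \ref{p:tate} together with the global and local Kummer sequences, and then pass to direct (resp.\ inverse) limits over $m$. The differences are in the details. The paper obtains its level-$m$ sequence
$$
0\ra \Sha^{1}(\be A)_{p^{\le m}}\ra\frac{\prod_{v}A(K_{v})/p^{\le m}}{A(K)/p^{\le m}}\ra H^{\le 1}(K, A^t_{p^{\le m}}\be)^D\ra\Bha_{m}\be(A)\ra 0
$$
by a snake-lemma argument on the two Kummer rows (using Propositions \ref{p:hasse} and \ref{c:h2} for the injectivity of the vertical maps), with fourth term $\Bha_{m}(A)=\cok[H^{\le 1}(K,A)_{p^{\le m}}\ra\bigoplus_{v}H^{\le 1}(K_{v},A)_{p^{\le m}}]$, and only identifies the direct limit $\varinjlim_{m}\Bha_{m}(A)\simeq(T_{p}\Sel(A^{t}))^D$ by citing the main theorem of \cite{gat07}. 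You instead identify the cokernel already at finite level as $\Sel(A^{t})_{p^{\le m}}^{\,D}$ via the exact-annihilator property of the local Kummer images under flat local duality; your four-term sequence is correct and is the quotient-free form of the paper's, and it bypasses \cite{gat07}, but the annihilator statement for $A(K_{v})/p^{\le m}$ and $A^{t}(K_{v})/p^{\le m}$ in characteristic $p$ is itself a nontrivial input that should be cited rather than called routine. For the inverse limits, your appeal to finiteness of $A(K)/p^{\le m}$ (Lang--N\'eron) and the vanishing of $\varprojlim^{1}$ does the same job as the paper's appeal to exactness of $\varprojlim$ on profinite groups, and your derivation of the third sequence directly from the left-exact truncation replaces the paper's closing diagram chase; both are fine.

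The obstacle you flag for the first sequence --- that $\varinjlim_{m}$ need not commute with the infinite product over $v$, so that $\varinjlim_{m}\prod_{v}A(K_{v})/p^{\le m}$ need not coincide with $\prod_{v}A(K_{v})\otimes\bq_{\le p}/\bz_{\e p}$ --- is not resolved by the paper either: the proof there simply ``takes the direct limit over $m$'' termwise in the displayed sequence. Your concern is substantive (in characteristic $p$ the groups $A(K_{v})\otimes\bq_{\le p}/\bz_{\e p}$ contain elements of unbounded order, so the natural map from the direct limit of the products to the product of the direct limits is not obviously surjective), so what you have located is a point where the published argument is itself too quick, rather than a defect special to your route. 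If you are content to reproduce the paper's proof you would perform the same termwise passage to the limit; a complete proof of the first sequence with the middle term as literally stated does require the supplementary argument you ask for, or a reinterpretation of that term as the direct limit.
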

\begin{proof} Let $m\geq 1$ be an integer. The exact
commutative diagram
$$
\xymatrix{ 0\ar[r] & A(K)/p^{\le m}\ar[r]\ar@{^{(}->}[d] &
H^{1}(K,A_{p^{\le
m}})\ar[r]\ar@{^{(}->}[d] & H^{\le 1}(K,A)_{p^{\le m}}\ar[d]\ar[r]&0\\
0\ar[r] & \displaystyle\prod_{{\rm{all}}\, v}A(K_{v})/p^{\le
m}\ar[r]& \displaystyle{\prod_{{\rm{all}}\,
v}}^{\,\prime}H^{1}(K_{v},A_{p^{\le m}})\ar[r]&
\displaystyle\bigoplus_{{\rm{all}}\, v}H^{\le 1}(K_{v},A)_{p^{\le
m}}\ar[r]&0,}
$$
yields an exact sequence of profinite abelian groups
\begin{equation}\label{seq2}
0\ra \Sha^{1}(\be A)_{p^{\le m}}\ra\frac{\prod_{\e {\rm{all}}\,
v}A(K_{v})/p^{\le m}}{A(K)/p^{\le m}}\ra H^{\le 1}(K, A^t_{p^{\le
m}}\be)^D\ra\Bha_{m}\be(A)\ra 0,
\end{equation}
where $\Bha_{m}(A)=\cok\left[H^{\le 1}(K,A)_{p^{\le
m}}\ra\bigoplus_{{\rm{all}}\, v}H^{\le 1}(K_{v},A)_{p^{\le
m}}\right]$. By the main theorem of \cite{gat07},
$\varinjlim_{m}\be\Bha_{m}(A)\simeq(T_{p}\Sel(A^{t}))^D$ and the
first exact sequence of the statement follows by taking the direct
limit over $m$ in \eqref{seq2}. On the other hand, since the inverse
limit functor is exact on the category of profinite groups
\cite{RZ}, Proposition 2.2.4, p.32, the inverse limit over $m$ of
the sequences \eqref{seq2} is the second exact sequence of the
statement. The third exact sequence follows from the second and the
exact commutative diagram
$$
\xymatrix{ 0\ar[r] & A(K)^{\wedge}\ar[r]\ar@{=}[d] &
T_{p}\Sel(A)\ar[r]\ar@{^{(}->}[d] & T_{p}\Sha^{1}(A)\ar@{^{(}->}[d]\ar[r]&0\\
0\ar[r] & A(K)^{\wedge}\ar[r]& \displaystyle\prod_{{\rm{all}}\,
v}A(K_{v})^{\wedge}\ar[r]& \left(\e\prod_{\e {\rm{all}}\,
v}A(K_{v})^{\wedge}\,\right)/A(K)^{\wedge}\ar[r]&0.}
$$
\end{proof}

\section{Concluding remarks}
Let $K$ and $M$ be as in Section 1 and assume that $M$ has $p$-power order. Further, let $M^{*}$ be the Cartier dual of $M$. Since $\Sha^{1}(K,M^{*})\simeq\Sha^{1}(G_{\lbe K},M^{*})$ and there exists a perfect pairing of finite groups
\begin{equation}\label{pair}
\Sha^{1}(K,M^{*})\times \Sha^{\le 2}(K,M)\ra \bq/\bz
\end{equation}
\cite{ga09}, Theorem 4.8, it is natural to expect a Galois-cohomological description of $\Sha^{\le 2}(K,M)$.
Note, however, that the natural guess $\Sha^{\le 2}(K,M)\simeq\Sha^{\le 2}(G_{\lbe K},M)$ is incorrect, since the latter group is zero
(because the $p$-cohomological dimension of $G_{\lbe K}$ is $\leq 1$). To obtain the correct answer, we proceed as follows.
Since  $H^{\le i}(K^{\le\rm{s}},A)=H^{\le i}(K^{\le\rm{s}},B)=0$ for all $i\geq 1$, the exact sequence \eqref{seq}
shows that $H^{\le i}(K^{\le\rm{s}},M)=0$ for all $i\geq 2$. On the other hand,
$H^{\le i}(G_{\lbe K},M)=0$ for all $i\geq 2$ as well, since $\text{cd}_{p}(G_{\lbe K})\leq 1$. Now the exact sequence of terms of low
degree belonging to the Hochschild-Serre spectral sequence $H^{\le i}(G_{\lbe K}, H^{\le j}(K^{\le\rm{s}},M) )\Rightarrow
H^{\le i+j}(K,M)$ yields a canonical isomorphism
$$H^{\le 2}(K,M)\simeq H^{\le 1}(G_{\lbe K}, H^{\le 1}(K^{\le\rm{s}},M) )\simeq H^{\le 1}(G_{\lbe K}, B(K^{\le\rm{s}} )/\psi(A(K^{\le\rm{s}} ) )).$$
Analogous isomorphisms exist over $K_v$ for every prime $v$ of $K$, and we conclude that
$$
\Sha^{\le 2}(K,M)\simeq \Sha^{\le 1}(G_K,B/\psi(A)).
$$
For example, if $M=A_{p^{\le m}}$ for some abelian variety $A$ over $K$, then $\Sha^{\le 2}(K,A_{p^{\le m}}\lbe)\simeq \Sha^{\le 1}(G_K,A/p^{\le m})$ and the pairing \eqref{pair} takes the form
$$
\Sha^{\le 1}(G_K,A^{t}_{p^{\le m}})\times \Sha^{\le 1}(G_K,A/{p^m})\ra\bq/\bz.
$$
In particular, if $A^{t}_{p^{\le m}}(K^{\le\rm{s}})$ is cyclic with $m\leq 2$ if $p=2$, then Proposition 3.2 applied to $A^{t}$ and the perfectness of the above pairing yield $\Sha^{\le 1}(G_K,A/{p^{\le m}})=0$, i.e., the Hasse principle holds for the $G_{\lbe K}$-module $A(K^{\le\rm{s}})/p^{\le m}$.

\end{document}